\newtheorem{thm}{Theorem}
\newtheorem{dfn}[thm]{Definition}
\newtheorem{lem}[thm]{Lemma}
\newtheorem{cor}[thm]{Corollary}
\newtheorem{prp}[thm]{Proposition}
\begin{document}

\title[Vector bundles trivialized by proper morphisms]{Vector bundles trivialized by 
proper morphisms and the fundamental group scheme}

\author[I. Biswas]{Indranil Biswas}

\address{School of Mathematics, Tata Institute of Fundamental
Research, Homi Bhabha Road, Bombay 400005, India}

\email{indranil@math.tifr.res.in}

\author[J. P. dos Santos]{Jo\~ao Pedro P. dos Santos}

\address{Universit\'e de Paris 6, Institut de Math\'ematiques de Jussieu,
175, Rue du Chevaleret, 75013 Paris, France}

\email{dos-santos@math.jussieu.fr}

\subjclass[2000]{14L15, 14F05}

\keywords{Essentially finite vector bundle, fundamental group
scheme, trivialization}

\date{}

\begin{abstract}
Let $X$ be a smooth projective variety
defined over an algebraically closed field $k$. 
Nori constructed a category of vector bundles on $X$, called essentially finite vector bundles, which 
is reminiscent of the category of representations of the fundamental 
group (in characteristic zero). In fact, this category is equivalent to the category of representations of a pro--finite group scheme which controls all finite torsors.  We show that essentially finite vector bundles coincide 
with those which become trivial after being pulled back by some proper 
and surjective morphism to $X$.
\end{abstract}

\maketitle

\section{Introduction}\label{first_section}
Let $k$ be an algebraically closed field, and let $X/k$ be a smooth projective 
variety. By a vector bundle we mean a locally free coherent sheaf. 
We are interested in studying vector bundles on $X$ enjoying the 
following property:
\begin{enumerate}
\item[\textbf{(T)}] There exists a proper $k$--scheme $Y$ together with 
a surjective
(proper) morphism $f:Y\longrightarrow X$ such that the pull--back $f^*E$ 
is trivial. 
\end{enumerate}

Note that we can, and usually will, assume that $Y$ is a proper variety: just replace $Y$ by the reduced subscheme underlying an irreducible component of $Y$ dominating $X$. 

In \cite{nori}, Nori introduced the category of
\emph{essentially finite} vector bundles on $X$; this category ``is''  the category of representations of a pro--finite group scheme which generalizes (respectively equals) the \'etale fundamental group of SGA1 in positive characteristic (respectively characteristic zero).  
A vector bundle $E\,\longrightarrow\, X$ is essentially finite if and only if there exists a finite group scheme $G$, a $G$--torsor 
$f\,:\,P\,\longrightarrow\, X$, and a representation $V$ of $G$, such 
that 
$E\,=\, P\times^GV$. (This is not Nori's original definition, but one of the main consequences of \cite{nori}.) Since the $G$--torsor $f^*P$ is canonically 
trivialized, the pull--back $f^*E$ is trivializable, so that essentially finite vector bundles enjoy property (T). 
We establish a converse, which should also be regarded as a generalization of a result 
due to Lange and Stuhler \cite[Proposition 1.2]{lange-stuhler} stating that a vector bundle trivialized by a finite \'etale covering comes from a representation of the \'etale fundamental group.  
The reader can also 
regard what follows in connection with Proposition 
1.2 of [SGA 1, X]; of course, the point here is not to impose conditions
on the trivializing morphism other than properness and surjectivity.

\begin{thm}\label{theorem_main} Let $X$ be a smooth and projective variety over the algebraically closed field $k$. A vector bundle $E$ over $X$ is 
essentially finite if and only if it satisfies property (T).   
\end{thm}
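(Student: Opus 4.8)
The forward implication (essential finiteness $\Rightarrow$ (T)) is already recorded in the introduction, so the real content is the converse: I must show that property (T) forces $E$ to be essentially finite. The plan is first to reduce to a \emph{finite} trivializing morphism. Given proper surjective $f\colon Y\to X$ with $f^*E$ trivial, I would use Chow's lemma to take $Y$ integral projective, cut by general hyperplane sections to lower its dimension to $\dim X$, and so obtain a generically finite surjective morphism; normalizing and applying Stein factorization writes it as $\tilde Z\xrightarrow{g}X'\xrightarrow{h}X$ with $g_*\mathcal O_{\tilde Z}=\mathcal O_{X'}$ and $h$ finite. As $g^*(h^*E)$ is trivial, the projection formula $g_*g^*(h^*E)=h^*E\otimes g_*\mathcal O_{\tilde Z}=h^*E$ forces $h^*E\cong\mathcal O_{X'}^{\oplus r}$, so after renaming I may assume $f\colon Y\to X$ is finite surjective with $Y$ normal projective. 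I would also note that, since the absolute Frobenius is natural ($f\circ F_Y=F_X\circ f$), every Frobenius pull-back $F_X^{m*}E$ is again trivialized by the same $f$.

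Next I would prove that $E$ is numerically flat. Nefness descends along finite surjective morphisms, and $f^*E\cong\mathcal O_Y^{\oplus r}$ and $f^*E^*\cong\mathcal O_Y^{\oplus r}$ are nef, so $E$ and $E^*$ are nef; equivalently $E$ is strongly semistable of degree $0$ with numerically trivial Chern classes (strong semistability also follows from the Frobenius remark and the injectivity of $f^*$ on numerical classes, via $f_*f^*=\deg f\cdot\mathrm{id}$). Consequently $E$ belongs to the neutral Tannakian category $\mathcal C=\langle E\rangle^{\otimes}$ of numerically flat bundles tensor-generated by $E$, with fibre functor $\omega_x$, affine monodromy group scheme $G\subseteq\mathrm{GL}(E_x)$, and a fundamental $G$-torsor $\varpi\colon P\to X$ satisfying $E=P\times^{G}E_x$. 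By Nori's theory it then suffices to show that $G$ is a \emph{finite} group scheme.

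A key structural point is that \emph{every} object $W\in\mathcal C$ is trivialized by the same $f$: the functor $f^*$ is exact and monoidal, tensor powers and duals of $E$ pull back to trivial bundles, and — crucially because $H^0(Y,\mathcal O_Y)=k$ — every morphism of trivial bundles on $Y$ is a constant matrix, so sub-objects, quotients and direct summands of $f$-trivial bundles are again $f$-trivial, with the constant trivialization. Hence $f^*\cong(\,\cdot\,\otimes\mathcal O_Y)\circ\omega_x$ as tensor functors on $\mathcal C$; that is, $f^*P\to Y$ is a trivial torsor. A section yields $\sigma\colon Y\to P$ over $X$ whose image $Z=\sigma(Y)\subseteq P$ is, after the reduction above, a finite surjective multisection of $\varpi$, and the same adjunction gives embeddings $W\hookrightarrow(f_*\mathcal O_Y)^{\oplus\mathrm{rk}\,W}$ for all $W\in\mathcal C$.

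The heart of the matter is to deduce that $G$ is finite, equivalently that $\varpi$ is proper (being affine it is then finite); this is where properness of $Y$ must be used globally, not merely formally. For each character $\chi$ of $G$ the associated line bundle $L_\chi\in\mathcal C$ satisfies $f^*L_\chi\cong\mathcal O_Y$, so the norm of the finite morphism gives $L_\chi^{\otimes\deg f}\cong\mathcal O_X$; thus all characters of $G$ define torsion line bundles of bounded order, which — after trivializing finitely many of them by a finite torsor over $X$ and invoking minimality of the monodromy group — rules out nontrivial torus quotients of $G$. Controlling the remaining semisimple, unipotent and infinitesimal parts of $G$ is the genuinely hard step: I would combine this norm/torsion estimate with the boundedness coming from the embeddings $W\hookrightarrow(f_*\mathcal O_Y)^{\oplus\mathrm{rk}\,W}$, whose slope-$0$ part has only finitely many stable subquotients and hence bounds the simple objects of $\mathcal C$. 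Once $\dim G=0$ and $\mathcal O(G)$ is finite-dimensional, the torsor $P\to X$ is a finite $G$-torsor and $E=P\times^{G}E_x$ exhibits $E$ as essentially finite. I expect this final finiteness step, rather than the geometric reductions, to be the main obstacle.
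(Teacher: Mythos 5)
Your reductions are sound and track the paper's: the Stein-factorization/finiteness reduction, the observation that $E$ lands in the category of strongly semistable bundles with vanishing numerical Chern classes, the Tannakian reformulation via the monodromy group $G$, and the key embedding $W\hookrightarrow (f_*\mathcal O_Y)^{\oplus \mathrm{rk}\,W}$ coming from the projection formula are all present in the paper (the last being its ``key observation''). But the proof has a genuine gap exactly where you flag ``the main obstacle'': you never actually prove $G$ is finite, and the sketch you offer for that step would not work as stated. First, having ``only finitely many stable subquotients'' does not imply finiteness of a Tannakian category in the required sense (every object must be a subquotient of $\Phi^{\oplus a}$ for a \emph{single} $\Phi$; compare $\mathrm{Rep}(\mathbb G_a)$, which has one simple object but infinite monodromy). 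The paper makes this precise by showing the image of $\alpha_W$ lies in $(\mathcal A_{\max})^{\oplus \mathrm{rk}\,W}$ where $\mathcal A_{\max}$ is the maximal destabilizing subsheaf of $\mathcal A=f_*\mathcal O_Y$, and then taking $\Phi=\sigma(\mathcal A_{\max})$, the \emph{largest subobject} of $\mathcal A_{\max}$ lying in the category (this uses stability under quotients, proved via a Grassmannian/ampleness argument, not via your ``morphisms of trivial bundles are constant matrices'' remark, which presupposes the subobject is already trivial).

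Second, and more seriously, the inequality $\mu_{\max}(f_*\mathcal O_Y)\le 0$ — which is what makes the embedding useful — is proved by an adjunction argument that \emph{requires $f$ to be generically \'etale}: one needs $f^*(\mathcal A_{\max})$ to remain semistable of positive degree, which fails for inseparable $f$ (e.g.\ $f=\mathrm{Fr}_X$, where $\mathrm{Fr}_{X*}\mathcal O_X$ typically has positive-slope subsheaves). Your proposal never separates out the purely inseparable part of $f$. The paper handles this by (i) a Lefschetz-type theorem for the S-fundamental group scheme (Langer) to reduce to curves, (ii) on curves, factoring $f=\mathrm{Fr}_X^m\circ g$ with $g$ separable, so that $\langle(\mathrm{Fr}_X^m)^*E\rangle_\otimes$ is finite by the separable case, and (iii) a Frobenius-twist argument on the monodromy representation showing $G$ is an extension of a finite group scheme by $\mathrm{Ker}(\mathrm{Fr}_G^m)$, hence finite. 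Your norm argument on characters only constrains the abelianization of $G$ and does not touch the unipotent or infinitesimal parts, which is precisely where the difficulty lies in positive characteristic. In characteristic zero your outline can be completed along the lines you indicate (and the paper notes an even easier trace-splitting argument there), but as a proof of the theorem in general it is incomplete.
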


Property (T) of vector bundles is in fact stronger than it appears to be. Here is what we mean by this.  
Let $f:Y\longrightarrow X$ be a proper and surjective morphism and
$E\, \longrightarrow\, X$ a vector bundle such that $f^*E$ is trivial. 
Consider the Stein factorization of $f$
\[
\xymatrix{Y\ar[rr]^-{f'} && Z=\mathrm{Spec}(f_*\mathcal O_Y) 
\ar[rr]^-{g} && X} \, .
\] 
Using the fact that $f_*'\mathcal O_Y=\mathcal O_{Z}$,
it follows that if $V\, \longrightarrow\, Z$ is a vector bundle such 
that $(f')^*V$ is trivial, then $V$  itself
is trivial. Hence, $g^*E$ is trivial. This shows that a vector bundle $E$ 
over $X$ satisfies property (T) if and only if it satisfies the following condition:
\begin{enumerate}
\item[\textbf{(TF)}] There exists a finite and surjective morphism $g:Z\longrightarrow X$ such that $g^*E$ is trivial. 
\end{enumerate} 

This allows us to state an equivalent version of Theorem \ref{theorem_main}:

\begin{thm}\label{theorem_main'} Let $X$ be a smooth and projective variety over the algebraically closed field $k$. A vector bundle $E$ over $X$ is essentially finite if and only if it satisfies property (TF).
\end{thm}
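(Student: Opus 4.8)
The plan is to prove the nontrivial implication, that property (TF) forces $E$ to be essentially finite; the converse was already observed in the introduction, since for a finite group scheme $G$ a $G$--torsor $f\colon P\to X$ is a finite surjective morphism and $f^*(P\times^G V)$ is trivial. So fix a finite surjective $g\colon Z\to X$ with $g^*E$ trivial; replacing $Z$ by (the normalization of) a component dominating $X$, I may assume $Z$ is an integral proper variety, so that $H^0(Z,\mathcal O_Z)=k$. The first step is to place $E$ in a Tannakian framework. Because $g^*E$ is trivial it is semistable of degree zero with respect to every polarization, and semistability of degree zero descends along the finite surjective morphism $g$; hence $E$ is Nori--semistable and defines an object of the neutral Tannakian category $\mathcal S(X)$ of Nori--semistable bundles, with fibre functor $\omega_x\colon W\mapsto W_x$ at a fixed point $x\in X$. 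Let $\langle E\rangle^{\otimes}\subseteq\mathcal S(X)$ be the Tannakian subcategory generated by $E$ and let $G=\underline{\mathrm{Aut}}^{\otimes}(\omega_x)$ be its affine Tannaka dual; since $E$ is a tensor generator, $G$ is a closed subgroup of $GL(\omega_x(E))$, in particular an affine group scheme of finite type. The whole problem is now reduced to showing that $G$ is a \emph{finite} group scheme.

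Next I would record that $g$ trivializes \emph{every} object of $\langle E\rangle^{\otimes}$, not just $E$. Indeed every such $W$ is a subquotient of a direct sum of bundles $E^{\otimes a}\otimes(E^\vee)^{\otimes b}$, each trivialized by $g$; and $g^*$ carries such a subquotient to a Nori--semistable degree--zero subquotient of a trivial bundle on $Z$, which is necessarily trivial (a globally generated semistable bundle of slope zero on a projective variety is trivial). Thus $g^*W$ is trivial for all $W$, and setting $V_W:=H^0(Z,g^*W)$ one has a canonical isomorphism $g^*W\cong V_W\otimes_k\mathcal O_Z$. The assignment $\omega_Z\colon W\mapsto V_W$ is then a $k$--linear tensor functor $\langle E\rangle^{\otimes}\to\mathrm{Vect}_k$; it is faithful because $g$ is surjective and tensor--compatible because $H^0(\mathcal O_Z)=k$. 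The one point needing care is exactness: a short exact sequence pulls back to a short exact sequence of \emph{trivial} bundles on $Z$, and any such sequence splits, because every morphism between trivial bundles on the connected proper scheme $Z$ is given by a constant (scalar) matrix. Hence $\omega_Z$ is a second neutral fibre functor, now ``defined through the cover $Z$.''

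The heart of the argument, and the step I expect to be the main obstacle, is to convert the properness of the cover into finiteness of $G$. Consider $Z\times_X Z$ with its two projections $p_1,p_2$ and let $h=g\circ p_1=g\circ p_2$. For each $W$ both $p_1^*g^*W$ and $p_2^*g^*W$ are canonically identified with $h^*W$, so comparing the two trivializations yields a canonical automorphism
\[
\theta_W\in \mathrm{Aut}_{\mathcal O_{Z\times_X Z}}\!\bigl(V_W\otimes_k\mathcal O_{Z\times_X Z}\bigr)=GL(V_W)\bigl(R\bigr),\qquad R:=H^0(Z\times_X Z,\mathcal O_{Z\times_X Z}).
\]
The family $(\theta_W)_W$ is natural in $W$ and compatible with tensor products, hence is an $R$--point of $\underline{\mathrm{Aut}}^{\otimes}(\omega_Z)$, a form of $G$ whose coordinate ring has the same $k$--dimension; equivalently the matrix coefficients of the $\theta_W$ define a $k$--algebra homomorphism $\Theta\colon\mathcal O(\underline{\mathrm{Aut}}^{\otimes}(\omega_Z))\to R$. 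Now $Z\times_X Z$ is finite over the proper variety $X$, hence proper over $k$, so $R$ is a \emph{finite-dimensional} $k$--algebra. The crux is therefore to show that $\Theta$ is injective, equivalently that the matrix coefficients of $\theta_E$ already generate the coordinate ring inside $R$; this amounts to reconstructing the Tannaka dual from the groupoid $Z\times_X Z\rightrightarrows Z$ attached to the cover, using that $\omega_Z$ is faithful and exact and that $E$ tensor--generates. Granting this, $\mathcal O(G)$ is finite-dimensional, so $G$ is a finite group scheme.

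Finally, once $G$ is finite the conclusion follows from Nori's dictionary: the finite Tannakian subcategory $\langle E\rangle^{\otimes}\subseteq\mathrm{Bun}(X)$, with fibre functor $\omega_x$, corresponds to a $G$--torsor $P\to X$ with $G$ finite, and $E\cong P\times^G\omega_x(E)$. By the characterization recalled in the introduction this says precisely that $E$ is essentially finite, proving Theorem \ref{theorem_main'}; combined with the equivalence of (T) and (TF) established above it also yields Theorem \ref{theorem_main}.
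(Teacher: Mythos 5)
Your proposal sets up the right framework (the Tannakian category generated by $E$, the fact that $g$ trivializes every object of $\langle E\rangle_\otimes$, the second fibre functor $W\mapsto H^0(Z,g^*W)$ and the comparison automorphisms $\theta_W$ over $Z\times_X Z$), and those preliminary steps are essentially sound. But the step you yourself flag and then assume --- the injectivity of $\Theta\colon\mathcal O(\underline{\mathrm{Aut}}^\otimes(\omega_Z))\to R$ --- is not a technical point to be ``granted'': it is the entire theorem, and in fact the reduction is circular in effect. Since $Z\times_X Z$ is proper over $k$ and $\underline{\mathrm{Aut}}^\otimes(\omega_Z)$ is affine, every connected component of $Z\times_X Z$ maps to a single point, so the scheme-theoretic image of $Z\times_X Z$ in the Tannaka dual is supported on finitely many points. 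Hence $\Theta$ can be injective only if the group scheme is already finite; proving injectivity of $\Theta$ is therefore not easier than, but rather equivalent to (and slightly stronger than), the finiteness you are trying to establish. No mechanism in your argument converts properness of the cover into finiteness of the group --- the finite-dimensionality of $R$ does no work until the surjectivity of the groupoid onto the Tannaka dual is proved, and that is exactly what is missing. (It is also not clear that it holds for an arbitrary finite surjective $Z\to X$, as opposed to a torsor.)

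For contrast, the paper closes this gap by an entirely different route. It first reduces to the case $\dim X=1$ using Langer's Lefschetz-type theorem for the S-fundamental group scheme (restriction to a suitable ample divisor induces a faithfully flat map of Tannaka duals). On a curve it factors the finite cover as $f=\mathrm{Fr}_X^m\circ g$ with $g$ generically \'etale. For the separable part, the projection formula gives $E\otimes_{\mathcal O_X}f_*\mathcal O_Y\cong (f_*\mathcal O_Y)^{\oplus r}$, hence a monomorphism $E\hookrightarrow(\mathcal A_{\max})^{\oplus r}$ into copies of the maximal destabilizing subsheaf of $\mathcal A=f_*\mathcal O_Y$ (which has slope zero by a semistability argument using separability); the largest subobject $\sigma(\mathcal A_{\max})$ lying in $\langle E\rangle_\otimes$ is then a finite generator of the category in the sense of Definition \ref{dfn_of_finiteness}. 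The Frobenius part is handled by a twisting argument on the monodromy representation, showing $G$ is an extension of the finite monodromy group of $(\mathrm{Fr}_X^m)^*E$ by the finite local kernel of $\mathrm{Fr}_G^m$. Some such quantitative input (here, the fixed bounding object $\mathcal A_{\max}$) is what your proposal lacks.
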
 

\textbf{Remarks:} (a)  Let $f:Y\longrightarrow X$ be a surjective morphism of finite type such that $f^*E\cong\mathcal O_Y^{\oplus r}$. For each $x:\mathrm{Spec}(k)\longrightarrow X$, we pick $y:\mathrm{Spec}(k)\longrightarrow Y$ satisfying $f\circ y=x$. We have $x^*E = y^*f^*E$, hence $x\mapsto \dim_k x^*E=r$ is a constant function on the closed points of $X$. As $X$ is of finite type, it must be a constant function \cite[p. 125, Ex. 5.8(a)]{hartshorne}. We then apply \cite[p. 125, Ex. 5.8(c)]{hartshorne} to conclude that $E$ is a vector bundle. 

(b) Below, see the remarks on page \pageref{a_remark},  we observe that Theorem \ref{theorem_main} is true in characteristic zero even if we only require $X$ to be \emph{normal}. 

(c) If $k$ is the algebraic closure of a finite field, then Theorem \ref{theorem_main} is a direct consequence of Proposition \ref{2} and Maruyama's conjecture proved by Langer \cite{langer_maruyama}. 
  
\subsection{Some notations and conventions}\label{conventions} 

As before, $k$ stands for an algebraically closed field. A variety is an integral and separated scheme of finite type over $k$. 
Here $X$ will always stand for a projective smooth variety over $k$. The dimension of $X$ is denoted by $d$. For any scheme $Y$, the category of vector bundles over $Y$ will be denoted by $\mathbf{VB}(Y)$. By $L$ we will denote a very ample line bundle on $X$. Degrees of vector bundles are always taken with respect to the polarization $L$. The rational number
$$
\mu(E)\, :=\,\frac{c_1(L)^{d-1}\cap c_1(E)}{\text{rank}(E)}=\,\frac{\text{degree}(E)}{\text{rank}(E)}
$$
is called the \textit{slope} of $E$.

\emph{Tannakian categories.} We will follow the conventions of
\cite{deligne-milne}. All Tannakian categories will be over the field 
$k$, and if $\omega:\mathcal T\longrightarrow k\mathrm{-mod}$ is a fiber 
functor, we will denote by $\pi(\mathcal T,\omega)$ the corresponding group scheme
\cite[Theorem 2.11]{deligne-milne}, \cite{nori}.

\section{Tannakian properties}

We use the formalism of Tannakian categories to study property 
(T).

\subsection{The category of objects with property (T)}

\begin{dfn}\label{1}
{\rm (i) Let $\mathcal T(X)$ denote the full subcategory of $\mathbf{VB}(X)$ whose objects are vector
bundles satisfying property (T), or equivalently, property (TF).}

{\rm (ii) Let $f: Y\longrightarrow X$ be a surjective and proper morphism. We will denote by $\mathcal T_Y(X)$ the full subcategory of $\mathcal T(X)$ whose objects become trivial when pulled back to $Y$.} 
\end{dfn}

A vector bundle $E\,\longrightarrow\, X$ is called \textit{Nori--semistable} if, for
every pair $(C\, ,\alpha)$, where $C/k$ is a smooth projective curve and 
$\alpha\, :\, C\,
\longrightarrow\, X$ is a morphism, the pull--back $\alpha^*E$ is semistable of degree zero.

\begin{prp}\label{2} Let $E\in \mathcal T(X)$. Then $E$ is Nori--semistable. 
\end{prp}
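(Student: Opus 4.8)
The plan is to reduce the statement to a question about vector bundles on a finite cover of the curve $C$, and then invoke the elementary theory of semistability on smooth projective curves. Fix a pair $(C,\alpha)$ as in the definition of Nori--semistability, and use property (TF) to choose a finite surjective morphism $g:Z\longrightarrow X$ with $g^*E\cong\mathcal O_Z^{\oplus r}$. The difficulty is that $\alpha$ need not lift to $Z$, so I would first manufacture a finite cover of $C$ over which $\alpha^*E$ becomes trivial.

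To do this, I would form the fibre product $C\times_X Z$ and project to $C$. Base change of the finite surjective morphism $g$ makes this projection finite and surjective, surjectivity holding because every fibre $Z_{\alpha(c)}$ is nonempty. Since $C$ is irreducible, some irreducible component $W$ of $C\times_X Z$ dominates $C$; I would equip $W$ with its reduced structure and take its normalization $\nu:\tilde C\longrightarrow W$, which is finite because $W$ is a variety over $k$. The composite $h:\tilde C\longrightarrow C$ is then a finite surjective morphism of smooth projective curves, and by construction it factors through $Z$. Consequently $(\alpha\circ h)^*E=h^*\alpha^*E$ is a pull--back of $g^*E\cong\mathcal O_Z^{\oplus r}$, hence trivial on $\tilde C$.

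It remains to show that a vector bundle $F:=\alpha^*E$ on the smooth projective curve $C$ whose pull--back $h^*F$ under a finite surjective $h:\tilde C\longrightarrow C$ of degree $n$ is trivial must be semistable of degree zero. For the degree, $n\cdot\deg F=\deg h^*F=0$ gives $\deg F=0$. For semistability, any subsheaf $S\subset F$ pulls back to a subsheaf $h^*S\subset h^*F=\mathcal O_{\tilde C}^{\oplus r}$; since the trivial bundle is semistable of slope $0$, one gets $\mu(h^*S)\le 0$, and as $\mu(h^*S)=n\,\mu(S)$ this forces $\mu(S)\le 0=\mu(F)$. Thus $F$ is semistable of degree zero, which is exactly what Nori--semistability of $E$ demands.

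The main obstacle is the construction of $\tilde C$: the fibre product $C\times_X Z$ can be reducible, non--reduced, and singular, so care is needed to extract a single smooth projective curve mapping finitely and surjectively onto $C$ while still factoring through $Z$. Once this geometric reduction is in place, the slope computation is routine.
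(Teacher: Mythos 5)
Your proof is correct and takes essentially the same route as the paper's: both produce a smooth projective curve (your $\tilde C$, the paper's $C'$) mapping surjectively onto $C$ and factoring through the trivializing cover, and then conclude from the triviality of the pullback that $\alpha^*E$ is semistable of degree zero. The paper merely leaves implicit the fibre--product/normalization construction and the slope computation that you spell out.
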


\begin{proof}Let $\alpha:C\longrightarrow X$ be a morphism from a curve.
Let $f\, :\, Y\, \longrightarrow\, X$ be a surjective proper morphism 
such that $f^*E$ is trivial. We can 
find a smooth and projective curve $C'/k$ and
morphisms $\beta\, :\, C'\,\longrightarrow\, Y$ and
$\nu:C'\longrightarrow C$ such that $f\circ\beta\,=\,\alpha\circ\nu$ and $\nu$ is surjective.
Hence $\nu^*\alpha^*E$ is trivial. This implies that 
$\alpha^*E$ is semistable of degree zero. 
\end{proof}

\begin{cor}\label{4}(i) Let $f: Y\longrightarrow X$ be a proper 
surjective morphism from a variety $Y$. Then $\mathcal T_Y(X)$ is 
abelian.

(ii)   The category $\mathcal T(X)$ is abelian.

(iii) Let $Y$ be as before, $E,Q$ be vector bundles of degree zero, and 
$\alpha:E\longrightarrow Q$ be an epimorphism. Then, if $f^*E$ is trivial, so is $f^*Q$. In particular, the 
subcategory $\mathcal T_Y(X)$ is stable under quotients in $\mathcal 
T(X)$.
\end{cor}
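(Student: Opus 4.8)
The plan is to reduce (ii) to (i), to prove (i) by a direct bootstrap that never leaves the category of coherent sheaves on $X$, and to prove (iii) by a positivity argument on the trivializing variety.

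For (ii), I would first observe that $\mathcal T(X)=\bigcup_Y\mathcal T_Y(X)$ in the following sense: given $E,E'\in\mathcal T(X)$ trivialized by $f_1:Y_1\to X$ and $f_2:Y_2\to X$, I choose a variety $Y$ with a proper surjection $f:Y\to X$ factoring through both (for instance the reduced irreducible component of $Y_1\times_X Y_2$ dominating $X$). Then $E,E'\in\mathcal T_Y(X)$, and any morphism between them is a morphism in $\mathcal T_Y(X)$; so (ii) is immediate from (i). The engine for (i) is that $Y$ is a proper integral variety over $k=\bar k$, whence $H^0(Y,\mathcal O_Y)=k$. Consequently, after fixing trivializations $f^*E\cong\mathcal O_Y^{a}$ and $f^*E'\cong\mathcal O_Y^{b}$, a morphism $\phi:E\to E'$ in $\mathcal T_Y(X)$ pulls back to a \emph{constant} matrix $M\in\mathrm{Mat}_{b\times a}(k)$, so $\ker$, $\mathrm{im}$ and $\mathrm{coker}$ of $f^*\phi$ are trivial subbundles and quotients of the ambient trivial bundles.

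I then run a bootstrap on $X$. Writing $N=\mathrm{coker}\,\phi$, right exactness of $f^*$ gives $f^*N\cong\mathrm{coker}(M)\otimes\mathcal O_Y$, a trivial bundle; hence the fibre dimension $x\mapsto\dim_k N\otimes k(x)$ is constant on $X$, and since $X$ is reduced, $N$ is locally free by the criterion of \cite[II, Ex.~5.8]{hartshorne} already used in Remark (a). Now that $N$ is a vector bundle, the sequence $0\to I\to E'\to N\to 0$ with $I=\mathrm{im}\,\phi$ stays exact after $f^*$, so $f^*I=\ker(\mathcal O_Y^{b}\to f^*N)$ is the kernel of a constant matrix, hence trivial; the same fibre-dimension argument makes $I$ locally free. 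Finally $I$ locally free keeps $0\to K\to E\to I\to 0$ (with $K=\ker\phi$) exact under $f^*$, so $f^*K$ is trivial and $K$ is a vector bundle. Thus kernel, image and cokernel of $\phi$ are again objects of $\mathcal T_Y(X)$, proving that $\mathcal T_Y(X)$ is abelian. For (iii), $f^*E$ trivial and $\alpha$ surjective force $f^*Q$ to be a quotient of $\mathcal O_Y^{r}$, hence globally generated and so nef; pushing curves forward along the surjective $f$ this says $c_1(Q)$ is nef on $X$. Since $\deg_L Q=c_1(Q)\cdot c_1(L)^{d-1}=0$ with $L$ ample, the Hodge index / Khovanskii–Teissier inequalities force $c_1(Q)$ to be numerically trivial. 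Then the classifying morphism $\psi:Y\to\mathrm{Grass}$ attached to $f^*E\twoheadrightarrow f^*Q$ satisfies $\deg(\psi^*\mathcal O(1)|_C)=c_1(f^*Q)\cdot C=0$ for every curve $C\subset Y$, so $\psi$ contracts all curves; as $Y$ is an irreducible proper variety (any two closed points lie on a connected chain of curves), $\psi$ is constant and $f^*Q$ is trivial. The last assertion of (iii) is the case where $Q$ is a quotient in $\mathcal T(X)$ of an object of $\mathcal T_Y(X)$: by Proposition \ref{2} such $E,Q$ are semistable of slope $0$, hence of degree zero, so the hypotheses apply.

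The main obstacle is local freeness in (i): a priori $\ker\phi$, $\mathrm{im}\,\phi$ and $\mathrm{coker}\,\phi$ are only coherent sheaves, and $f^*$ is merely right exact, so one cannot simply pull back kernels. The device that breaks the circle is to treat the cokernel first, where right exactness already suffices, deduce its local freeness from constancy of fibre dimension, and then use that local freeness to restore exactness of $f^*$ one step at a time, handling $\mathrm{im}\,\phi$ and then $\ker\phi$. (Proposition \ref{2} would place the whole computation inside the abelian category of slope-$0$ semistable sheaves, but the bootstrap above makes this unnecessary for (i).) For (iii) the delicate point is instead the positivity input, namely upgrading ``$\deg_L Q=0$ together with $c_1(Q)$ nef'' to ``$c_1(Q)$ numerically trivial''.
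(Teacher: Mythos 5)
Your proof is correct. Parts (ii) and (iii) follow essentially the paper's own route: (ii) is reduced to (i) by passing to a common trivializing cover, and (iii) is exactly the paper's Grassmannian argument --- the classifying map $\gamma:Y\longrightarrow\mathrm{Gr}(q,n)$ with $\gamma^*\mathcal U=f^*Q$ must be constant because $\det\mathcal U$ is ample while $\deg Q=0$; you merely make explicit the intermediate positivity step (a nef class $c_1(Q)$ of degree zero against the ample $L$ is numerically trivial, hence $\gamma$ contracts every curve) that the paper leaves implicit. The genuine divergence is in (i). The paper invokes Proposition \ref{2} (Nori--semistability of objects of $\mathcal T(X)$) together with Nori's Lemma 3.6 to conclude at one stroke that $\mathrm{Ker}(\alpha)$, $\mathrm{Im}(\alpha)$ and $\mathrm{Coker}(\alpha)$ are locally free, and only then checks compatibility with $f^*$. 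Your bootstrap --- treat the cokernel first using only right-exactness of $f^*$ and the fact that $f^*\phi$ is a constant matrix (since $H^0(Y,\mathcal O_Y)=k$ for the proper variety $Y$), deduce local freeness from constancy of fibre dimension as in Remark (a), then restore left-exactness of $f^*$ one step at a time via vanishing of $L_1f^*$ on the locally free sheaf just produced, handling the image and then the kernel --- is self-contained and avoids both semistability and Nori's lemma. What the paper's route buys is brevity, given that Proposition \ref{2} is needed anyway for the Tannakian formalism; what yours buys is independence of part (i) from that machinery, at the cost of a longer d\'evissage. You are also right that Proposition \ref{2} cannot be dispensed with entirely: it is still needed for the final clause of (iii), to know that a quotient in $\mathcal T(X)$ of an object of $\mathcal T_Y(X)$ has degree zero, and your write-up correctly flags this.
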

\begin{proof}(i) Let $\alpha:E\longrightarrow F$ be an arrow of $\mathcal T_Y(X)$. From the assumptions, we conclude that 
$\mathrm{Ker}(f^*(\alpha))$, $\mathrm{Im}(\varphi^*(\alpha))$ and 
$\mathrm{Coker}(f^*(\alpha))$ are all trivial. 
Since $E$ and $F$ are Nori--semistable, it follows that 
$\mathrm{Ker}(\alpha), \mathrm{Im}(\alpha)$ and 
$\mathrm{Coker}(\alpha)$ are all vector bundles \cite[Lemma 3.6]{nori}. 
It is 
then straightforward to see that $f^*$ commutes with $\mathrm{Ker}(\alpha)$ and $\mathrm{Coker}(\alpha)$ so that the kernel and cokernel of $\alpha$ are in $\mathcal T_Y(X)$.

Part (ii) can be easily deduced from (i), since every pair $E\, 
,E'\,\in\, \mathcal T(X)$ belong to $\mathcal T_Y(X)$ for some $Y$ which
is integral.

To prove part (iii),
let $q$ and $n$ be the ranks of $Q$ and $E$ respectively. Let 
$\mathrm{Gr}(q,n)$ 
be the Grassmannian parametrizing linear subspaces of codimension $q$ in 
${\mathbb A}^n$. We let $\mathcal U$ be the universal quotient of $\mathcal 
O_{\mathrm{Gr}(q,n)}^{\oplus n}$, so that the line bundle 
$\mathrm{det}(\mathcal U)\, \longrightarrow\, \mathrm{Gr}(q,n)$ is very 
ample 
\cite{nitsure}.  The hypothesis on $E$ allows us to define a morphism 
\[
\gamma:Y\longrightarrow \mathrm{Gr}(q,n) 
\]  
such that $\gamma^*\mathcal U=f^*Q$. Unless the dimension of the image 
of $\gamma$ is zero, the ampleness of $\mathrm{det}(\mathcal U)$
contradicts the assumption $\deg(Q)=0$. Hence $\gamma$ is a trivial 
morphism, and thereby $f^*Q$ is trivial.
\end{proof}

It is clear that if $E,F$ are objects of $\mathcal T(X)$ (respectively,
$\mathcal T_Y(X)$), then $E\otimes_{\mathcal O_X}F$ is also an object 
of $\mathcal T(X)$ (respectively, $\mathcal T_Y(X)$). This endows 
$\mathcal 
T(X)$ (respectively, $\mathcal T_Y(X)$) with a $k$--linear monoidal 
structure. 

\begin{cor}\label{5} The category $\mathcal T(X)$ is Tannakian over 
$k$. If $x_0$ is a $k$--rational point, then taking the fiber at $x_0$ 
defines an exact and faithful tensor functor $x_0^*\,:\,\mathcal 
T(X)\,\longrightarrow\, k\mbox{--mod}$.
\end{cor}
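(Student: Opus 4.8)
The plan is to verify the axioms of a (neutral) Tannakian category in the sense of \cite{deligne-milne}: I must check that $\mathcal{T}(X)$ is a rigid, abelian, $k$--linear, symmetric monoidal category with $\operatorname{End}(\mathcal{O}_X)=k$, and then exhibit $x_0^*$ as a fiber functor, whence by \cite[Theorem 2.11]{deligne-milne} the category is Tannakian. Most of the required structure is inherited from the ambient category $\mathbf{VB}(X)$. The symmetric monoidal structure is the usual tensor product of vector bundles, which by the remark preceding this corollary restricts to $\mathcal{T}(X)$; the abelian structure is provided by Corollary \ref{4}(ii); and the unit object is $\mathcal{O}_X$, which lies in $\mathcal{T}(X)$ since $f^*\mathcal{O}_X=\mathcal{O}_Y$ for any trivializing $f$.

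For rigidity I would observe that $\mathcal{T}(X)$ is closed under duals: if $f^*E$ is trivial, then $f^*(E^\vee)\cong (f^*E)^\vee$ is trivial as well, so $E^\vee\in\mathcal{T}(X)$. Consequently the internal Hom $\underline{\operatorname{Hom}}(E,F)\cong E^\vee\otimes_{\mathcal{O}_X} F$ also lies in $\mathcal{T}(X)$, and the evaluation and coevaluation morphisms, being arrows of $\mathbf{VB}(X)$ between objects of $\mathcal{T}(X)$, belong to the full subcategory $\mathcal{T}(X)$. For the unit object, since $X$ is an integral projective scheme over the algebraically closed field $k$, one has $\operatorname{End}_{\mathbf{VB}(X)}(\mathcal{O}_X)=H^0(X,\mathcal{O}_X)=k$, and fullness of $\mathcal{T}(X)$ gives $\operatorname{End}_{\mathcal{T}(X)}(\mathcal{O}_X)=k$.

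It then remains to check that $x_0^*\colon E\mapsto E\otimes_{\mathcal{O}_X}k(x_0)$ is a fiber functor. It is manifestly $k$--linear and monoidal, since $\mathcal{O}_X\otimes_{\mathcal{O}_X}k(x_0)=k$ and the fiber of a tensor product is the tensor product of the fibers. For exactness, the essential point is that a short exact sequence in $\mathcal{T}(X)$ is, by Corollary \ref{4}, a short exact sequence of vector bundles; such a sequence is locally split because its terms are locally free, so it stays exact after applying $-\otimes_{\mathcal{O}_X}k(x_0)$. Faithfulness then follows from exactness: if $\alpha$ is a nonzero arrow, its image (computed in $\mathbf{VB}(X)$, hence an object of $\mathcal{T}(X)$ by Corollary \ref{4}) is a vector bundle of positive rank, so $x_0^*(\operatorname{Im}\alpha)=\operatorname{Im}(x_0^*\alpha)$ has positive dimension and $x_0^*(\alpha)\neq 0$.

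The only genuinely non-formal ingredients are the preservation of kernels, cokernels and images as vector bundles, which is exactly the content of Corollary \ref{4} (resting on \cite[Lemma 3.6]{nori}), and the exactness of fiber--taking; I expect the latter to be the main point to state carefully, since $-\otimes_{\mathcal{O}_X}k(x_0)$ is only right exact in general and becomes exact here precisely because all the relevant sequences are sequences of locally free sheaves. Everything else is inherited from $\mathbf{VB}(X)$ through the fullness of the subcategory $\mathcal{T}(X)$.
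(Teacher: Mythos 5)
Your proof is correct and follows exactly the route the paper intends: the authors give no explicit proof of Corollary \ref{5}, treating it as an immediate consequence of Corollary \ref{4} and the preceding remark on tensor products, and your write-up supplies precisely the standard verifications (closure under duals and unit, $\operatorname{End}(\mathcal O_X)=k$ from integrality, and exactness of $x_0^*$ via local splitness of exact sequences of locally free sheaves) that they leave implicit. No gaps.
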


\subsection{Reformulation of Theorem \ref{theorem_main} in terms of the category $\mathcal 
T(X)$}
In order to state the next result, we need to introduce some terminology and recall some well known results from the theory of Tannakian categories. 
 
\begin{dfn}\label{monodromy} {\rm Let $(\mathcal T,\otimes)$ be a 
Tannakian 
category over $k$ and $V$ an object of $\mathcal T$. The} monodromy 
category of $V$, or the Tannakian subcategory generated by $V$, {\rm is 
the full subcategory}
\[
\langle   V;\mathcal T  \rangle_\otimes
\]
{\rm of $\mathcal T$ admitting as objects the sub-quotients of all 
generalized tensor powers}
\[
\left[V^{\otimes a_1}\otimes (V^{\vee})^{\otimes b_1}\right]\oplus
\cdots \oplus \left[ V^{\otimes a_r}\otimes (V^{\vee})^{\otimes 
b_r}\right]\, ,
\] 
{\rm where $V^\vee$ is the dual to $V$. If $\omega:\mathcal 
T\longrightarrow 
k\mathrm{-mod}$ is a fiber functor, we define the monodromy group of $V$ 
at $\omega$ to be the Tannakian group scheme associated to the monodromy 
category via $\omega$. (See \cite[Theorem 2.11]{deligne-milne}).}
\end{dfn}

\textbf{Remark:}  It is not hard to show that if $\mathcal 
T\,=\,\mathrm{Rep}(G)$, where $G$ is an affine group scheme over $k$, 
then 
the 
monodromy 
category of $V\in\mathrm{Rep}(G)$ is equivalent to the category of representations of $\mathrm{Im}\, (G\longrightarrow \mathrm{GL}(V))$.

\begin{dfn}\label{dfn_of_finiteness} {\rm Let $\mathcal T$ be a 
Tannakian category over $k$. We say that 
$\mathcal T$ is} finite {\rm if there exists an object $\Phi$ such that 
every object $M\in \mathcal T$ is a subquotient of some direct sum 
$\Phi^{\oplus a}$. (The terminology is justified by \cite[Proposition 2.20]{deligne-milne}.)} 
\end{dfn}

The following theorem, which will be proved in Section \ref{sec4}, 
implies Theorem \ref{theorem_main}.

\begin{thm}\label{theorem_main''} For each $E\in\mathcal T(X)$, the Tannakian category $\langle   E;\mathcal T(X)       \rangle_\otimes$ is finite (see Definition \ref{dfn_of_finiteness}). 
\end{thm}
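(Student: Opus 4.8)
The plan is to translate the statement into a finiteness property of the monodromy group and then prove that by reducing, via spreading out, to the two extreme situations that are already understood. First I would fix $E$ and, using property (TF), choose a finite surjective $g:Z\to X$ with $g^*E\cong\mathcal O_Z^{\oplus r}$. By Corollary \ref{4}(iii) and its dual (apply it to the surjection $E^\vee\twoheadrightarrow {E'}^\vee$ attached to a sub-object $E'\subseteq E$), every sub-object and quotient of a $g$-trivial degree-zero bundle is again $g$-trivial; hence every generalized tensor power of $E$ and each of its sub-quotients is trivialized by the single morphism $g$. Thus $\langle E;\mathcal T(X)\rangle_\otimes\subseteq\mathcal T_Z(X)$, and by Proposition \ref{2} every object here is Nori--semistable of degree zero with numerically trivial Chern classes (the last point because $g^*$ is injective on numerical classes). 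By the Remark following Definition \ref{monodromy} together with \cite[Proposition 2.20]{deligne-milne}, the category $\langle E;\mathcal T(X)\rangle_\otimes$ is finite in the sense of Definition \ref{dfn_of_finiteness} if and only if the monodromy group $M\subseteq\mathrm{GL}(x_0^*E)$ is a finite group scheme; since $M$ is a closed subscheme of $\mathrm{GL}(x_0^*E)$, this amounts to showing $\dim M=0$, which is the goal.

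Next I would dispose of the two characteristics separately and then bridge to arbitrary $k$. In characteristic zero I would pass to the Galois closure of $g$: let $\widetilde Z\to X$ be the normalization of $X$ in a normal closure of $k(Z)/k(X)$, with finite group $\Gamma$ acting and $\widetilde Z/\Gamma=X$. Over the dense open locus where $\widetilde Z\to X$ is \'etale it is a $\Gamma$-torsor, so there $E$ is associated to a $\Gamma$-representation on $x_0^*E$; by normality of $X$ this descent datum extends over all of $X$, exhibiting $E$ as essentially finite with structure group $\Gamma$, so that $M$ is a quotient of the constant finite group $\Gamma$. (This is also what underlies the normal--$X$ assertion of Remark (b).) When $k=\overline{\mathbb F}_p$ I would instead follow Remark (c): Proposition \ref{2} makes $E$ strongly semistable of degree zero, Langer's boundedness \cite{langer_maruyama} makes the family of Frobenius pull-backs $\{(F^n)^*E\}_{n\ge 0}$ bounded, and over $\overline{\mathbb F}_p$ a bounded family has only finitely many members, forcing $(F^a)^*E\cong(F^b)^*E$ for some $a<b$; such Frobenius periodicity yields finiteness of $M$ by the argument of \cite[Proposition 1.2]{lange-stuhler}.

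For an arbitrary algebraically closed field of characteristic $p$ I would spread the whole configuration $(X,E,g)$ out over a finitely generated $\mathbb F_p$--subalgebra $R$ of $k$, obtaining $(\mathcal X,\mathcal E,\mathcal G)\to\mathrm{Spec}\,R$ with $\mathcal G$ finite surjective and trivializing $\mathcal E$ fibrewise. At each closed point $s\in\mathrm{Spec}\,R$ the residue field is finite, so the previous paragraph applies to the geometric fibre and produces a finite monodromy group $M_s$; crucially, the fixed cover $\mathcal G$ bounds the degree of a trivializing cover uniformly in $s$, which I would exploit to bound the order of $M_s$ independently of $s$. A specialization/semicontinuity argument for the monodromy group scheme then transports this uniform bound to the generic geometric fibre, that is, back to $E$ over $k$, giving $\dim M=0$.

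The main obstacle is precisely this last, uniform, step in characteristic $p$: extracting from the single finite morphism $g$ a bound on the order of the finite monodromy at every closed point of $\mathrm{Spec}\,R$, and showing that finiteness of the monodromy group scheme is preserved under the relevant specialization. The delicate feature is the ramification and, above all, the inseparability of $g$---whose infinitesimal part corresponds to factors such as $\mu_p$ and $\alpha_p$ and is invisible to the \'etale Galois-closure picture used in characteristic zero---so that one cannot simply descend along a torsor but must instead control the entire bounded family of Frobenius twists uniformly over the base.
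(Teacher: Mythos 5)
Your first paragraph is sound and agrees with the paper: by Corollary \ref{4}(iii) and its dual every object of $\langle E;\mathcal T(X)\rangle_\otimes$ lies in $\mathcal T_Z(X)$ for a single finite surjective $g:Z\to X$, and finiteness of the category is equivalent to finiteness of the monodromy group scheme. Your two ``extreme'' cases also essentially reproduce what the paper itself points out: characteristic zero is handled by a Galois-closure/trace argument (cf.\ Remark (2) after Theorem \ref{theorem_main_separable}, which is cleaner than your ``the descent datum extends by normality,'' a step that as stated is not automatic since the branch locus is a divisor --- you need the cocycle to be constant, via $H^0(\widetilde Z,\mathcal O)=k$, and then a trace/Reynolds operator to identify $E$ with the associated bundle everywhere); and $k=\overline{\mathbb F}_p$ is exactly Remark (c), via Langer's boundedness and Frobenius periodicity (note the periodicity comes from the finiteness of the set of members of a bounded family \emph{defined over a fixed finite field}, not from finiteness of the bounded family over $\overline{\mathbb F}_p$, which is false).

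The genuine gap is the bridge to an arbitrary algebraically closed field of characteristic $p$, which you flag but do not fill, and which is where the entire content of the theorem lies. Your spreading-out plan founders on exactly the point you name: the fixed cover $\mathcal G$ does \emph{not} bound $|M_s|$ uniformly in $s$. The order of the monodromy at a closed fibre is controlled by the Frobenius period $b-a$ with $(F^a)^*\mathcal E_s\cong(F^b)^*\mathcal E_s$, and that period is bounded only by the number of $\kappa(s)$-points of the (spread-out) bounded family, which grows without bound as $|\kappa(s)|\to\infty$; the degree of $g$ gives no control over it. Moreover no semicontinuity statement for the monodromy group scheme under specialization is available to transport even a hypothetical uniform bound to the generic geometric fibre. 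The paper's proof takes a completely different and unconditional route: a Lefschetz theorem for the S-fundamental group scheme reduces to curves (Proposition \ref{curves_implies_general}); for a separable cover of curves one shows $\mu_{\max}(f_*\mathcal O_Y)=0$, embeds every object of $\langle E;\mathcal T_Y(X)\rangle_\otimes$ into copies of $\mathcal A_{\max}$, and exhibits the largest subobject $\sigma(\mathcal A_{\max})$ as the generator required by Definition \ref{dfn_of_finiteness}; and the inseparable part is absorbed by factoring $f=\mathrm{Fr}_X^m\circ g$ and showing $G$ is an extension of the finite image of $\rho^{(m)}$ by the finite local $\mathrm{Ker}(\mathrm{Fr}_G^m)$. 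Without an argument of comparable substance for general $k$ of positive characteristic, your proposal does not prove the theorem.
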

\begin{proof}[Proof that Theorem \ref{theorem_main''} implies Theorem 
\ref{theorem_main}:] Take any $E\in\mathcal T(X)$, and let $G$ be the 
finite group scheme associated, via the fiber functor $x_0^*$, to the 
category $\langle E;\mathcal T(X) \rangle_\otimes$. Hence, by the 
results in \cite[\S~2]{nori}, there exists a $G$-torsor 
$P\longrightarrow X$ such that the functor
\[
P\times^G(\bullet)\,:\, \mathrm{Rep}(G)\,\longrightarrow\, \langle 
E;\mathcal T(X) \rangle_\otimes\, ,
\]
that send any $G$--module $V$ to the associated vector bundle
$P\times^G V$, is an equivalence of Tannakian categories.
But if $V$ is a finite dimensional representation of $G$, then 
\cite[Proposition 3.8]{nori} shows that $P\times^GV$ is essentially 
finite. 
\end{proof}

\section{The S--fundamental group scheme and reduction to the case of 
curves}
We will show how to reduce the proof of Theorem \ref{theorem_main''} to 
the case where $X$ is of dimension one. This is possible due to a 
``Lefschetz Theorem'' proved in \cite{langer_preprint}.

\subsection{The S--fundamental group scheme \cite{BPS}, 
\cite{langer_preprint}}
\begin{dfn}\label{S-sheaves} {\rm The category $\mathrm{Vect}^s_0(X)$ is 
the full sub-category of $\mathbf{VB}(X)$ whose objects are strongly 
semistable vector bundles $V$ with}
\[
[c_1(L)^{d-1}\cap c_1(V)]\, =\, [c_1(L)^{d-2}\cap \mathrm{ch}_2(V)]\, 
=\, 0\, ,
\]
{\rm where $L$ is a fixed polarization on $X$ and $d=\dim\,X$.}
\end{dfn}

The category $\mathrm{Vect}^s_0(X)$ in Definition \ref{S-sheaves} is 
Tannakian \cite[Proposition 5.4]{langer_preprint} and the fiber functor
constructed using a $k$--point $x_0$ defines a fundamental group scheme:
\[
\pi(\mathrm{Vect}^s_0(X),x_0)\, .
\]

Let $\mathrm{Fr}_X\, :\, X\, \longrightarrow\, X$ be the absolute
Frobenius morphism if $\mathrm{char}(k)>0$ and the identity map otherwise.  

If $E\in\mathcal T(X)$, then clearly $\mathrm{Fr}_X^*E$ is also in 
$\mathcal T(X)$. Hence, if $E$ is a vector bundle of $\mathcal T(X)$,
then $(\mathrm{Fr}_X^m)^*E$ is Nori--semistable for every $m\, 
\in\, \mathbb N$; 
this entails that $E$ is strongly semistable with respect to the
polarization $L$ on $X$. Using 
the projection formula for Chern classes, it is also clear that $c_i(E)$ is numerically trivial for any $E\in \mathcal T(X)$ and any $i>0$. Thus we have a natural fully faithful exact functor  of Tannakian categories 
\begin{equation}\label{functor_of_cats}
\mathcal T(X)\longrightarrow \mathrm{Vect}_0^s(X).
\end{equation}
By Corollary \ref{4} (iii) and  \cite[Proposition 2.21]{deligne-milne}, we have:
\begin{lem}\label{application_stability_quotients}
The homomorphism of group schemes corresponding to \eqref{functor_of_cats} 
\begin{equation}\label{surj}
\pi(\mathrm{Vect}^s_0(X),x_0)\, \longrightarrow\, \pi(\mathcal T(X),x_0)
\end{equation}
is faithfully flat. 
\end{lem}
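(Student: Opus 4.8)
The plan is to verify, for the inclusion functor \eqref{functor_of_cats}, the hypotheses of the faithful--flatness criterion \cite[Proposition 2.21]{deligne-milne}. Recall that for an exact tensor functor $w\colon\mathcal P\to\mathcal Q$ of Tannakian categories over $k$, compatible with the chosen fibre functors at $x_0$, the induced homomorphism $\pi(\mathcal Q)\to\pi(\mathcal P)$ is faithfully flat precisely when $w$ is fully faithful and the essential image of $w$ is stable under passing to subobjects inside $\mathcal Q$. I would apply this with $\mathcal P=\mathcal T(X)$, $\mathcal Q=\mathrm{Vect}^s_0(X)$ and $w$ the inclusion functor, which indeed induces the map \eqref{surj}. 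Full faithfulness is automatic: both $\mathcal T(X)$ and $\mathrm{Vect}^s_0(X)$ are by construction full subcategories of $\mathbf{VB}(X)$, and $w$ is the inclusion. Thus the entire content of the lemma is the stability--under--subobjects condition.

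So the task reduces to the following: given $E\in\mathcal T(X)$ and a subobject $V$ of $E$ formed in $\mathrm{Vect}^s_0(X)$, show that $V$ again lies in $\mathcal T(X)$. The route I would take is to trade this subobject statement for a quotient statement by means of duality, since Corollary \ref{4}(iii) provides exactly the stability of $\mathcal T(X)$ under quotients. First I would note that $\mathcal T(X)$ is stable under taking duals: if $f\colon Y\to X$ is proper and surjective with $f^*E$ trivial, then $f^*(E^\vee)=(f^*E)^\vee$ is trivial as well, so $E^\vee\in\mathcal T(X)$. Consequently, for a full subcategory closed under duals, stability under subobjects and stability under quotients are equivalent, and it suffices to produce the latter in the form supplied by Corollary \ref{4}(iii).

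Concretely, I would dualize the short exact sequence $0\to V\to E\to Q\to 0$ defining $V$, where $Q=E/V$ is formed in the abelian category $\mathrm{Vect}^s_0(X)$. Because all the morphisms involved are morphisms of Nori--semistable bundles, \cite[Lemma 3.6]{nori} guarantees that this is an honest exact sequence of vector bundles, so dualizing yields an exact sequence $0\to Q^\vee\to E^\vee\to V^\vee\to 0$ in $\mathbf{VB}(X)$. Here $E^\vee\in\mathcal T(X)$ by the previous paragraph, while every object of $\mathrm{Vect}^s_0(X)$ has degree zero by Definition \ref{S-sheaves}; hence $V^\vee$, being the dual of an object of $\mathrm{Vect}^s_0(X)$, is a degree--zero quotient of $E^\vee$. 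Choosing a proper surjective $f$ that trivializes $E^\vee$, Corollary \ref{4}(iii) shows that $f^*V^\vee$ is trivial, whence $V^\vee\in\mathcal T(X)$ and therefore $V=(V^\vee)^\vee\in\mathcal T(X)$, as required.

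The step I expect to demand the most care is the compatibility between the abstract Tannakian notion of subobject in $\mathrm{Vect}^s_0(X)$ and the geometric notion of subbundle: one must check that a subobject $V$ of $E$ is genuinely a locally free subsheaf with locally free quotient, so that exactness survives the functor $(-)^\vee$ and so that the degree--zero hypothesis of Corollary \ref{4}(iii) is actually in force. This is precisely what the strong semistability and Chern--class normalization built into Definition \ref{S-sheaves}, together with \cite[Lemma 3.6]{nori}, are designed to deliver; once this bookkeeping is settled, the duality argument closes the proof and \cite[Proposition 2.21]{deligne-milne} gives the faithful flatness of \eqref{surj}.
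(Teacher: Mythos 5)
Your proof is correct and is essentially the paper's own argument: the paper derives the lemma directly from Corollary \ref{4}(iii) together with \cite[Proposition 2.21]{deligne-milne}, leaving implicit exactly the bookkeeping you supply. Your duality step converting stability under subobjects into the quotient statement of Corollary \ref{4}(iii) is the right way to fill in that gap, so nothing further is needed.
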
 

\begin{prp}\label{curves_implies_general}Assume that Theorem \ref{theorem_main''} holds for curves. Then it holds for higher dimensional $X$. 
\end{prp}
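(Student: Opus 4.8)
The plan is to recast the finiteness of the monodromy category $\langle E;\mathcal T(X)\rangle_\otimes$ as the finiteness of a monodromy group scheme, and then to transport that group scheme onto a Lefschetz curve, where the assumed curve case of Theorem \ref{theorem_main''} applies. The guiding principle throughout is that a faithfully flat homomorphism of affine group schemes, being an epimorphism, does not change the scheme--theoretic image in $\mathrm{GL}(x_0^*E)$, i.e. it does not change a monodromy group (cf. the Remark following Definition \ref{monodromy}). As a first step I would reduce from $\mathcal T(X)$ to $\mathrm{Vect}^s_0(X)$: by Definition \ref{dfn_of_finiteness} and \cite[Proposition 2.20]{deligne-milne}, the category $\langle E;\mathcal T(X)\rangle_\otimes$ is finite precisely when the monodromy group of $E$, i.e. the image of $\pi(\mathcal T(X),x_0)\to\mathrm{GL}(x_0^*E)$, is a finite group scheme. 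Since the functor \eqref{functor_of_cats} is fully faithful and the induced homomorphism \eqref{surj} is faithfully flat (Lemma \ref{application_stability_quotients}), this image coincides with the image of $\pi(\mathrm{Vect}^s_0(X),x_0)\to\mathrm{GL}(x_0^*E)$. Hence it suffices to prove that the monodromy group of $E$ computed inside $\mathrm{Vect}^s_0(X)$ is finite.

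Next I would invoke the Lefschetz theorem. Since $\dim X\ge 2$, the result of \cite{langer_preprint} provides a smooth projective complete intersection curve $\iota:C\hookrightarrow X$, of sufficiently high degree and passing through $x_0$, such that restriction defines a functor $\iota^*:\mathrm{Vect}^s_0(X)\to\mathrm{Vect}^s_0(C)$ inducing a faithfully flat homomorphism $\pi(\mathrm{Vect}^s_0(C),x_0)\to\pi(\mathrm{Vect}^s_0(X),x_0)$. Applying the comparison principle again, faithful flatness identifies the monodromy group of $\iota^*E$ in $\mathrm{Vect}^s_0(C)$ with that of $E$ in $\mathrm{Vect}^s_0(X)$, so I am reduced to proving that the former is finite.

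Finally I would descend to $\mathcal T(C)$ and use the hypothesis. The bundle $\iota^*E$ lies in $\mathcal T(C)$: if $g:Z\to X$ is finite and surjective with $g^*E$ trivial (property (TF)), then the base change $Z\times_X C\to C$ is again finite and surjective and trivializes $\iota^*E$. Running the comparison principle once more — this time for the analogue of the faithfully flat homomorphism \eqref{surj} for the curve $C$, which holds by the same reasoning (Corollary \ref{4}(iii) together with \cite[Proposition 2.21]{deligne-milne}) applied to $C$ — identifies the monodromy group of $\iota^*E$ in $\mathrm{Vect}^s_0(C)$ with its monodromy group in $\mathcal T(C)$. The assumed curve case of Theorem \ref{theorem_main''} says exactly that $\langle\iota^*E;\mathcal T(C)\rangle_\otimes$ is finite, i.e. that this last group is finite. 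Reading the chain of identifications backwards yields the finiteness of the monodromy group of $E$ in $\mathcal T(X)$, which is Theorem \ref{theorem_main''} for $X$.

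The main obstacle is the Lefschetz theorem itself — the existence of a curve $C$ for which restriction induces a faithfully flat map of S--fundamental group schemes; this geometric input is quoted from \cite{langer_preprint}, while everything surrounding it is Tannakian bookkeeping about images of homomorphisms of affine group schemes. A subsidiary point to verify is that $\iota^*$ genuinely lands in $\mathrm{Vect}^s_0(C)$, i.e. that strong semistability and the vanishing of the relevant Chern numbers survive restriction to a general high--degree complete intersection curve; this too is part of the Lefschetz package in \cite{langer_preprint} and is what makes the choice of $C$ legitimate.
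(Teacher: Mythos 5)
Your proposal is correct and follows essentially the same route as the paper: both hinge on Langer's Lefschetz theorem for the S--fundamental group scheme together with Lemma \ref{application_stability_quotients} (and its analogue on the curve) to transfer the monodromy of $E$ from $\mathcal T(X)$ through $\mathrm{Vect}^s_0(X)$ down to $\mathcal T(C)$, where the curve case applies. The only differences are presentational --- you phrase the comparisons as equalities of monodromy group schemes where the paper phrases them as equivalences of monodromy categories, and you cut directly to a complete intersection curve where the paper inducts one ample divisor at a time --- neither of which changes the substance of the argument.
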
 
\begin{proof}We will proceed by induction. We assume that 
$\dim(X)\,\ge\, 
2$ and that the theorem has been proved for all smooth 
projective varieties $C$ with $\dim C\, <\, \dim X$. We will now establish the existence of a smooth irreducible effective divisor $C\hookrightarrow X$ such that 
the natural 
homomorphism of group schemes  
\begin{equation}\label{leff}
\pi(\mathrm{Vect}_0^s(C),x_0)\longrightarrow \pi(\mathrm{Vect}_0^s(X),x_0)
\end{equation}
is faithfully flat. Due to \cite[Theorem 
10.2]{langer_preprint}, it is enough to find a smooth, connected, ample effective divisor $C$ of high degree. We now apply Bertini's Theorem (see \cite[p. 179, Theorem 8.18]{hartshorne} and \cite[7.9.1, p. 245]{hartshorne}) to $X$ embedded in $\mathbb P^N$ using the line bundle $L^{\otimes r}$.

Let 
$f:Y\longrightarrow X$ be a finite morphism from a variety $Y$ to $X$ 
and consider $E\in\mathcal T_Y(X)$. Clearly the restriction $E\vert_C$ 
is an object of $\mathcal T(C)$. From
Lemma \ref{application_stability_quotients} we know that the natural functors 
\[
\langle E; \mathcal T(X)\rangle_\otimes\, \longrightarrow\, \langle E; 
\mathrm{Vect}^s_0(X)  \rangle_\otimes
\]
and 
\[
\langle E|_C; \mathcal T(C)\rangle_\otimes\,\longrightarrow\,\langle 
E|_C; \mathrm{Vect}^s_0(C)  \rangle_\otimes
\]
are equivalences: they are fully faithful, their essential image is stable by subquotients and, by definition (see Definition \ref{monodromy}), any object of the target category is a subquotient of an object from the source category. 
As the homomorphism in eq. \eqref{leff} is faithfully flat, the functor
\[
\langle E; \mathrm{Vect}^s_0(X)  \rangle_\otimes\,\longrightarrow\,  
\langle E|_C; \mathrm{Vect}^s_0(C)\rangle_\otimes 
\]
is also an equivalence (follows by repeating the previous argument). In conclusion, the Tannakian categories
$$
\langle E; \mathcal T(X)\rangle_\otimes ~\,~\,~\,~\,\text{~and~} 
~\,~\,~\,~\,
\langle E|_C; \mathcal T(C)\rangle_\otimes
$$
are equivalent, and hence $\langle E; \mathcal T(X)\rangle_\otimes$ 
is finite by the induction hypothesis.  
\end{proof}

\section{The case of curve (conclusion of proof)}\label{sec4}

{}From now on we assume 
that $X$ is a smooth projective (connected) curve over $k$. This has the consequence that  torsion free sheaves and vector bundles coincide. 

\subsection{The maximal slope of certain coherent $\mathcal 
O_X$--algebras}
Let 
\[
f\,:\,Y\,\longrightarrow \,X
\] 
be a finite dominant morphism from a projective curve $Y$
and take any $E\in\mathcal T_Y(X)$, i.e.,
the vector bundle $f^*E$ 
is trivial. We assume that the 
extension of function fields provided by $f$ is \emph{separable} (in 
other words, $f$ is generically \'etale). Let $\mathcal A$ denote the 
coherent $\mathcal O_X$--algebra $f_*(\mathcal O_X)$. 

The following very simple observation is the key for all further considerations: Using the projection formula we have an isomorphism 
\begin{equation}
E\otimes_{\mathcal O_X} \mathcal A\cong  \mathcal A^{\oplus r}\, .  
\end{equation}
This isomorphism induces a monomorphism of $\mathcal O_X$--modules 
\begin{equation}
\alpha:E\hookrightarrow \mathcal A^{\oplus r}\, .
\end{equation}

\begin{prp}\label{image_in_max} Let $\mathcal A_\mathrm{max}$ denote the maximal destabilizing subbundle of $\mathcal A$. Then the image of $\alpha$ is contained in $(\mathcal A_\mathrm{max})^{\oplus r}$.
\end{prp}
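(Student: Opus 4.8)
The plan is to reduce the statement to a single numerical fact about $\mathcal A$ and then prove that fact from the algebra structure. Write $r=\mathrm{rank}(E)$ and let $\alpha_j\colon E\to\mathcal A$ (for $1\le j\le r$) be the $r$ components of $\alpha$. To obtain $\mathrm{im}(\alpha)\subseteq(\mathcal A_{\mathrm{max}})^{\oplus r}$ it suffices to show that the composite $E\xrightarrow{\alpha}\mathcal A^{\oplus r}\to(\mathcal A/\mathcal A_{\mathrm{max}})^{\oplus r}$ vanishes. By Proposition \ref{2} the bundle $E$ is Nori--semistable, hence on the curve $X$ it is semistable of slope $0$; and on a curve there is no nonzero homomorphism from a semistable sheaf of slope $0$ to a coherent sheaf all of whose Harder--Narasimhan slopes are strictly negative. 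Since $\mu_{\max}(\mathcal A/\mathcal A_{\mathrm{max}})<\mu_{\max}(\mathcal A)$ by the very definition of the maximal destabilizing subsheaf, the whole statement follows once we prove
\[
\mu_{\max}(\mathcal A)\;=\;0.
\]

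One inequality is free: the unit $\mathcal O_X\hookrightarrow\mathcal A$ of the algebra is a subsheaf of slope $0$, so $\mu_{\max}(\mathcal A)\ge 0$. For the reverse inequality I would exploit that $\mathcal A=f_*\mathcal O_Y$ is a sheaf of $\mathcal O_X$--algebras, in fact of domains since $Y$ is integral. Put $\mu_0:=\mu_{\max}(\mathcal A)$ and let $F:=\mathcal A_{\mathrm{max}}$, which is semistable of slope $\mu_0$. Consider the multiplication map $m\colon F\otimes_{\mathcal O_X}F\to\mathcal A$. As $\mathcal A$ is a domain and $F\ne 0$, the map $m$ is nonzero, so $\mathrm{im}(m)$ is a nonzero subsheaf of $\mathcal A$. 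In characteristic $0$ the sheaf $F\otimes F$ is semistable of slope $2\mu_0$, so every Harder--Narasimhan slope of its quotient $\mathrm{im}(m)$ is $\ge 2\mu_0$, while the inclusion $\mathrm{im}(m)\subseteq\mathcal A$ forces each such slope to be $\le\mu_0$. Hence $2\mu_0\le\mu_0$, i.e. $\mu_0\le 0$, and therefore $\mu_{\max}(\mathcal A)=0$.

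The one delicate point, and the step I expect to be the main obstacle, is the semistability of $F\otimes F$: it is automatic in characteristic $0$ but may fail in characteristic $p$, where tensor products of semistable bundles need not be semistable. To repair the argument I would replace ordinary slopes by the limit slopes of the strong Harder--Narasimhan filtration, $\overline{\mu}_{\max}(\bullet)=\lim_m p^{-m}\mu_{\max}((\mathrm{Fr}_X^m)^*\bullet)$ from Langer's theory, using that $\overline{\mu}_{\max}\ge\mu_{\max}$ always and that strong semistability is stable under tensor products and under Frobenius pullbacks. Taking $F$ to be the maximal \emph{strongly} destabilizing subsheaf (strongly semistable of slope $\overline{\mu}_0:=\overline{\mu}_{\max}(\mathcal A)$), the sheaf $F\otimes F$ is strongly semistable of slope $2\overline{\mu}_0$, and the identical comparison of $\mathrm{im}(m)$ inside $\mathcal A$ gives $2\overline{\mu}_0\le\overline{\mu}_0$; thus $\overline{\mu}_{\max}(\mathcal A)\le 0$, and since $0\le\mu_{\max}(\mathcal A)\le\overline{\mu}_{\max}(\mathcal A)$ we again conclude $\mu_{\max}(\mathcal A)=0$. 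The real work is checking that these limit slopes are genuinely tensor-multiplicative and behave correctly under passage to the multiplication image; once that bookkeeping is in place, everything else is the formal slope comparison of the first paragraph.
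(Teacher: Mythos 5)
Your reduction to the single equality $\mu_{\mathrm{max}}(\mathcal A)=0$ is exactly the paper's first step, and your characteristic-zero argument for $\mu_{\mathrm{max}}(\mathcal A)\le 0$ is correct but takes a genuinely different route: you exploit the algebra structure of $\mathcal A=f_*\mathcal O_Y$ (the multiplication $\mathcal A_{\mathrm{max}}\otimes\mathcal A_{\mathrm{max}}\to\mathcal A$ is nonzero because $K(Y)$ is a domain) together with the theorem that tensor products of semistable bundles on a curve are semistable. The paper instead uses adjunction: $\mathrm{Hom}_Y(f^*\mathcal A_{\mathrm{max}},\mathcal O_Y)=\mathrm{Hom}_X(\mathcal A_{\mathrm{max}},\mathcal A)$ contains the inclusion and so is nonzero, whereas if $\mu_{\mathrm{max}}(\mathcal A)>0$ then $f^*\mathcal A_{\mathrm{max}}$ is semistable of positive degree (pullback under a finite \emph{separable} morphism preserves semistability) and admits no nonzero map to $\mathcal O_Y$. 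That argument is both more elementary (no tensor-semistability input) and uniform in all characteristics, which matters because the proposition is needed precisely in positive characteristic.

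Your characteristic-$p$ repair has a genuine gap beyond bookkeeping. First, a subsheaf $F\subseteq\mathcal A$ that is strongly semistable of slope $\overline{\mu}_{\mathrm{max}}(\mathcal A)$ need not exist: the strong Harder--Narasimhan filtration only stabilizes after pulling back by $\mathrm{Fr}_X^m$ for $m\gg 0$, so the multiplication argument has to be run inside $(\mathrm{Fr}_X^m)^*\mathcal A$. Second, and this is the real issue, you then need the multiplication map of $(\mathrm{Fr}_X^m)^*\mathcal A$ to be nonzero on $F_m\otimes F_m$, i.e.\ you need $(\mathrm{Fr}_X^m)^*\mathcal A$ to be generically a domain. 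This sheaf of algebras corresponds to the base change of $Y\to X$ along $\mathrm{Fr}_X^m$, which can fail to be integral in general; it works here only because $K(Y)/K(X)$ is separable and hence linearly disjoint from the purely inseparable extension furnished by Frobenius. In other words, the separability hypothesis --- which your write-up never invokes and which is the crux of the paper's proof --- must enter at exactly this point. With that supplied, and with Ramanan--Ramanathan/Langer providing the tensor-multiplicativity of $\overline{\mu}_{\mathrm{max}}$, your argument does close, but as written it is incomplete in positive characteristic.
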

\begin{proof}Evidently, the proposition will be proved once we 
establish that 
$$
\mu_\mathrm{max}(\mathcal A^{\oplus r})\,=\, \mu_\mathrm{max}(\mathcal 
A)\,=\, 0\, .
$$
Note that it is enough to show that $\mu_\mathrm{max}(\mathcal
A)\,\le\, 0$.

Assume that $\mu_\mathrm{max}\,>\, 0$, so that $\mathcal 
A_\mathrm{max}$ 
is semistable of positive slope. By adjointness, we have that 
\[
\mathrm{Hom}_Y(f^*(\mathcal A_\mathrm{max})\, ,\mathcal O_Y)\, =\,  
\mathrm{Hom}_X(\mathcal A_\mathrm{max},\mathcal A)\,\not=\, 0\, .
\] 
But, by the separability hypothesis made on $f$, we know that 
$f^*(\mathcal A_\mathrm{max})$ is ($f^*L$)--semistable of positive 
degree, so there are no non-zero homomorphisms from $f^*(\mathcal 
A_\mathrm{max})$ to $\mathcal O_Y$. Hence $\mu_\mathrm{max}(\mathcal
A)\,\le\, 0$. 
\end{proof}

Proposition \ref{image_in_max} has the following corollary:

\begin{cor}\label{semistable_dominating} Let $f:Y\longrightarrow X$ be as above. 
Then there exists a semistable locally free coherent $\mathcal O_X$--module 
$M\, :=\,{\mathcal A}_\mathrm{max}$ of degree 
zero such that for each $E\,\in\, \mathcal T_Y(X)$, there exists a 
monomorphism of $\mathcal O_X$--modules 
\[
\alpha_E\,:\, E\,\hookrightarrow \, M^{\oplus \mathrm{rank}\, E}\, .
\]
\end{cor}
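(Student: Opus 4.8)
The plan is to read the corollary directly off Proposition \ref{image_in_max}, which already carries all the geometric content; what remains is only to repackage that result so that the receiving bundle $M$ is fixed once and for all, independently of the bundle $E$.

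First I would set $M := \mathcal A_\mathrm{max}$, the maximal destabilizing subbundle of $\mathcal A = f_*\mathcal O_Y$. Because $X$ is a smooth curve and $f$ is finite, $\mathcal A$ is a torsion-free and hence locally free $\mathcal O_X$-module, so its Harder--Narasimhan filtration is by subbundles and $M$ is itself locally free. The essential point is that $M$ depends only on $f$, not on the choice of $E \in \mathcal T_Y(X)$, which is precisely the uniformity the statement demands.

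Next I would check the two asserted properties of $M$. Semistability is immediate, since the maximal destabilizing subbundle is semistable by definition. For the degree, recall that the proof of Proposition \ref{image_in_max} establishes $\mu_\mathrm{max}(\mathcal A) = 0$; as the slope of $M = \mathcal A_\mathrm{max}$ is exactly $\mu_\mathrm{max}(\mathcal A)$, we obtain $\mu(M) = 0$ and therefore $\deg(M) = \mu(M)\cdot \mathrm{rank}(M) = 0$.

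Finally I would exhibit the monomorphism. For $E \in \mathcal T_Y(X)$ of rank $r = \mathrm{rank}\, E$, the projection-formula embedding $\alpha : E \hookrightarrow \mathcal A^{\oplus r}$ has, by Proposition \ref{image_in_max}, image contained in $(\mathcal A_\mathrm{max})^{\oplus r} = M^{\oplus r}$; restricting the codomain of $\alpha$ gives the sought monomorphism $\alpha_E : E \hookrightarrow M^{\oplus \mathrm{rank}\, E}$. I expect no genuine obstacle here: the substantive work --- the slope estimate obtained via adjunction together with the separability hypothesis --- was already carried out in the proposition, and the corollary amounts to the observation that the single bundle $\mathcal A_\mathrm{max}$, intrinsic to $f$, simultaneously receives every member of the family $\{E\}_{E \in \mathcal T_Y(X)}$.
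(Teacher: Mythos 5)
Your proposal is correct and follows exactly the route the paper intends: the corollary is stated as an immediate consequence of Proposition \ref{image_in_max}, with $M=\mathcal A_{\mathrm{max}}$ semistable by definition, of degree zero because the proof of that proposition shows $\mu_{\mathrm{max}}(\mathcal A)=0$, and with $\alpha_E$ obtained by restricting the codomain of the projection-formula embedding. Nothing is missing.
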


\subsection{The case of separable (generically \'etale) 
morphisms}\label{section_main_separable} We continue with the above 
notation:
$$f\,:\,Y\,\longrightarrow \,X$$ is a finite surjective morphism from a 
projective curve $Y$ which induces a \emph{separable extension} of function 
fields; $E$ is a vector bundle on $X$ such that $f^*E$ is 
trivial. 

Recall that in Corollary \ref{semistable_dominating}, we showed that 
each $V\,\in\,\langle E;\mathcal T(X)\rangle_\otimes$ is a sub--quotient 
of 
a direct sum of copies of a fixed torsion free semistable coherent sheaf of 
slope zero.

\begin{thm}\label{theorem_main_separable} Let $f\,:\,Y\,\longrightarrow 
\,X$ 
and $E$ be as above. Then the category $\langle E;\mathcal 
T(X)\rangle_\otimes$ is finite. In particular, Theorem \ref{theorem_main''} (and hence Theorem \ref{theorem_main}) is true if $k$ has characteristic zero (see also the remark below). 
\end{thm}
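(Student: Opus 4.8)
The plan is to prove the stronger assertion that the entire category $\mathcal T_Y(X)$ is finite, and then read off the theorem. Since every generalized tensor power of $E$ and $E^\vee$ is again trivialized by $f$, and since $\mathcal T_Y(X)$ is stable under subobjects and quotients inside $\mathcal T(X)$ by Corollary \ref{4}, the monodromy category $\langle E;\mathcal T(X)\rangle_\otimes$ is a full Tannakian subcategory of $\mathcal T_Y(X)$; hence finiteness of the latter forces finiteness of the former. By Definition \ref{dfn_of_finiteness} it then suffices to produce a single object $\Phi\in\mathcal T_Y(X)$ such that every object is a subquotient of some $\Phi^{\oplus a}$.

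First I would pin down the simple objects. By Corollary \ref{semistable_dominating} there is a \emph{fixed} semistable sheaf $M=\mathcal A_\mathrm{max}$ of degree zero such that every $V\in\mathcal T_Y(X)$ admits a monomorphism $V\hookrightarrow M^{\oplus\mathrm{rank}\,V}$. If $S$ is a simple object of $\mathcal T_Y(X)$, then $S$ is stable of slope zero and some composite $S\hookrightarrow M^{\oplus a}\twoheadrightarrow M$ is nonzero; as $S$ is stable and $M$ semistable of the same slope, $S$ occurs as a Jordan--H\"older factor of $M$ in the abelian category of semistable sheaves of slope zero. Because $M$ is fixed, this shows that $\mathcal T_Y(X)$ has only finitely many isomorphism classes of simple objects, say $S_1,\dots,S_t$, each of which lies in $\langle E;\mathcal T(X)\rangle_\otimes$ (being a subquotient of a tensor power of $E$).

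The crux, and the step where the characteristic-zero hypothesis is indispensable, is to show that $\mathcal T_Y(X)$ is \emph{semisimple}; having finitely many simple objects is by itself not enough, as $\mathrm{Rep}(\mathbb G_a)$ already has a unique simple object yet is not finite. Given a short exact sequence $0\to A\to V\to B\to 0$ in $\mathcal T_Y(X)$, its pullback along $f$ is a short exact sequence of trivial bundles on $Y$, hence splits; a splitting is an element of $\mathrm{Hom}_Y(f^*B,f^*V)=\mathrm{Hom}_X(B,V\otimes\mathcal A)$ by adjunction and the projection formula. I would then push this splitting down using the trace $\mathrm{tr}\colon\mathcal A=f_*\mathcal O_Y\to\mathcal O_X$: composing with $\mathrm{id}_V\otimes\mathrm{tr}$ gives an $\mathcal O_X$-linear map $B\to V$, and unwinding the adjunction shows that its composite with $V\to B$ equals $\mathrm{tr}(1)\cdot\mathrm{id}_B=\deg(f)\cdot\mathrm{id}_B$. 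Since $f$ is separable and $\mathrm{char}\,k=0$, the integer $\deg f$ is invertible, so dividing yields an honest splitting of $V\to B$. Thus every extension in $\mathcal T_Y(X)$ splits.

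With semisimplicity established the proof concludes at once: every object of $\mathcal T_Y(X)$ is a direct sum of copies of $S_1,\dots,S_t$, so $\Phi:=S_1\oplus\cdots\oplus S_t$ is a generator in the sense of Definition \ref{dfn_of_finiteness} and $\mathcal T_Y(X)$ is finite; a fortiori $\langle E;\mathcal T(X)\rangle_\otimes$ is finite. As every finite morphism of curves in characteristic zero is separable, this proves Theorem \ref{theorem_main''}, and hence Theorem \ref{theorem_main}, over fields of characteristic zero. I expect the semisimplicity step to be the main obstacle: it is precisely the transfer/trace splitting that degenerates in positive characteristic (and for inseparable $f$), which is why the general case must be treated separately.
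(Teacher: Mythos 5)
Your argument proves strictly less than the statement asserts, and the shortfall matters for the paper's overall strategy. Theorem \ref{theorem_main_separable} claims finiteness of $\langle E;\mathcal T(X)\rangle_\otimes$ for \emph{any} generically \'etale $f$ in \emph{arbitrary} characteristic; the characteristic-zero assertion is only the ``in particular''. This full strength is indispensable: in Section \ref{the_case_of_curves} an arbitrary finite cover is factored as $f=\mathrm{Fr}_X^m\circ g$ with $g$ separable, and the theorem is then applied to $g$ in positive characteristic. Your key step --- splitting every extension by pushing a splitting over $Y$ down with $\mathrm{Tr}_{\mathcal A/\mathcal O_X}$ and dividing by $\mathrm{Tr}(1)=\deg f$ --- requires $\deg f$ to be invertible in $k$. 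Separability does not give this: an Artin--Schreier \'etale $\mathbb Z/p$-cover in characteristic $p$ is separable of degree $p$. Worse, the conclusion you are aiming for is false there: for such a cover $P\to X$ the bundle $P\times^{\mathbb Z/p}k[\mathbb Z/p]$ lies in $\mathcal T_P(X)$ and is a non-split iterated extension of $\mathcal O_X$ (the regular representation of $\mathbb Z/p$ is not semisimple in characteristic $p$), so $\mathcal T_Y(X)$ is genuinely not semisimple and the approach cannot be repaired by a finer bookkeeping of the trace. The paper's proof avoids semisimplicity entirely: it defines $\sigma(M)$ as the largest subobject of $M=\mathcal A_{\mathrm{max}}$ lying in $\langle E;\mathcal T_Y(X)\rangle_\otimes$ (existence via the ascending chain condition and closure under images, Proposition \ref{2}) and shows that every monomorphism $V\hookrightarrow M^{\oplus l}$ from Corollary \ref{semistable_dominating} factors through $\sigma(M)^{\oplus l}$, so $\sigma(M)$ is the generator $\Phi$. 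What you have rediscovered is essentially the paper's Remark (2) on page \pageref{a_remark}, which the authors explicitly flag as a characteristic-zero shortcut (there even for merely normal $X$).

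Two smaller points within the part you do prove. First, the assertion that a simple object $S$ of $\mathcal T_Y(X)$ is a stable sheaf is correct but not free: you need to know that a saturated degree-zero subsheaf $S'\subseteq S$ again lies in $\mathcal T_Y(X)$, which rests on the lemma that a degree-zero subbundle of a trivial bundle on a smooth projective curve is trivial (e.g.\ via $h^0\le\mathrm{rank}$ for semistable bundles of slope zero, with equality only for trivial ones). Second, the parenthetical claim that each $S_i$ is a subquotient of a tensor power of $E$ is unjustified --- $\mathcal T_Y(X)$ can be larger than $\langle E;\mathcal T(X)\rangle_\otimes$ --- but it is also unnecessary, since finiteness of $\mathcal T_Y(X)$ passes to the Tannakian subcategory $\langle E;\mathcal T(X)\rangle_\otimes$ because the latter's Tannaka group is a quotient of the former's. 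With these caveats, your argument is a correct proof of the characteristic-zero case (and, more generally, of the case $p\nmid\deg f$), but not of the theorem as stated.
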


\begin{proof} As the subcategory $\mathcal T_Y(X)$ of $\mathcal T(X)$ 
is stable under sub--quotients (Proposition \ref{2}), we have $\langle 
E;\mathcal T_Y(X)  \rangle_\otimes\, =\, \langle E;\mathcal 
T(X)\rangle_\otimes$. Let $M$ be the $\mathcal O_X$--module appearing in 
Corollary \ref{semistable_dominating}, so $M\,=\,{\mathcal A}_\mathrm{max}$.
We want to find a vector bundle
$\sigma(M)\,\in\,  \langle E;\mathcal T_Y(X)  \rangle_\otimes$ which is a submodule of $M$ and 
which 
induces, for every $V\,\in\, \langle E;\mathcal T_Y(X)  \rangle_\otimes$, a factorization
\[
\xymatrix{   V\ar@{^{(}->}[r]^{\alpha_V} \ar[d]  &  M^{\oplus l} \\  \sigma(M)^{\oplus l}\ar@{^{(}->}[ru]     }
\]
of the monomorphism $\alpha_V$ displayed in Corollary \ref{semistable_dominating}. By definition, this will prove that  
$\langle E;\mathcal T_Y(X)  \rangle_\otimes$ is finite. 

Now let $N$ be an arbitrary semistable
torsionfree sheaf on $X$ of slope zero. Let $\sigma(N)\,\subseteq \,N$ be the largest sub--object of $N$ belonging to $\langle E; \mathcal T_Y(X)\rangle_\otimes$; the existence of $\sigma(N)$ is guaranteed by the following two facts
\begin{enumerate}
\item each ascending chain of sub--sheaves $N_1\subseteq N_2\subseteq \cdots\subseteq N$ must terminate;
\item if $N_1$ and $N_2$ are sub--objects of $N$ belonging to $\langle E; \mathcal T_Y(X)\rangle_\otimes$, then $$N_1+N_2\,=\,\mathrm{Im}(N_1\oplus N_2\longrightarrow N)$$ must also be in $\langle E; \mathcal T_Y(X)\rangle_\otimes$, due to Proposition \ref{2}. 
\end{enumerate}
Let $V\in\langle E; \mathcal T_Y(X)\rangle_\otimes$, and let $\alpha_V\,:\,V\,\hookrightarrow \, M^{\oplus l}$ be a monomorphism. It follows that $\alpha_V$ factors through the inclusion $\sigma(M^{\oplus l})\,\subseteq\, M^{\oplus l}$. But again, using Proposition \ref{2}, we see that $\sigma(M^{\oplus l})\,=\, \sigma(M)^{\oplus l}$. 
\end{proof}

\textbf{Remarks:} (1) In the proof of Theorem \ref{theorem_main_separable}, we considered the largest sub--object lying in $\mathcal T_Y(X)$ of a torsionfree semistable sheaf of slope zero. This can be put in a more abstract setting: finding a right adjoint for the inclusion of $\mathcal T_Y(X)$ into the category of torsionfree semistables of slope zero. The important point is, of course, stability
under quotients (Proposition \ref{2}). The reasoning is reminiscent of the construction of a right adjoint for the inclusion of categories $\mathrm{Rep}(H)\longrightarrow \mathrm{Rep}(G)$, where $G\longrightarrow H$ is surjective. 

\label{a_remark}(2) In characteristic zero, there is also an easy proof of Theorem \ref{theorem_main} which only assumes that the existence of a \emph{trace morphism} 
\[
\mathrm{Tr}_{\mathcal A/\mathcal O_X}:\mathcal A\, \longrightarrow \,\mathcal O_X\, .
\]
(So normality of $X$ is already sufficient.)
Such an $\mathcal O_X$--linear morphism allows us to find a section of the inclusion of $\mathcal O_X$ modules $\mathcal O_X\hookrightarrow \mathcal A$, so that, for $E\in\mathcal T_Y(X)$, each $E^{\otimes n}$ is a direct summand of $\mathcal A^{\oplus l}$. Hence, the indecomposable coherent $\mathcal O_X$--modules appearing in $E^{\otimes n}$ are isomorphic to certain indecomposable components of $\mathcal A$ (this uses the uniqueness of the Remak decomposition, see \cite[p. 313, Theorem 1]{At} and \cite[p. 315, Theorem 2]{At}); we then apply \cite[Lemma 3.1]{nori} to conclude that $E$ is \emph{finite}.

\subsection{Proof of Theorem \ref{theorem_main''} in the case of curves}\label{the_case_of_curves}

Let $E\in\mathcal T_Y(X)$, where $f:Y\longrightarrow X$ is a finite surjective
morphism. We can assume without loss of generality that $Y$ is smooth and irreducible. Using the fact that the only purely inseparable morphisms between smooth curves are the Frobenia,  we can find a factorization of $f$ as $$f\,=\, \mathrm{Fr}_X^{m}\circ g\, ,$$ where $g:Y\longrightarrow X$ induces a separable extension of function fields.
Then 
\[
\langle (\mathrm{Fr}_X^m)^*E ; \mathcal T(X)\rangle_\otimes
\] 
is finite due to Theorem \ref{theorem_main_separable}. Let $G$ be the monodromy group of $E$ in $\mathcal T(X)$ (see Definition \ref{monodromy}), and let 
\[
\rho\,:\,G\,\longrightarrow\, \mathrm{GL}(x_0^*E)\,=\,\mathrm{GL}_{r}
\] 
be the \emph{faithful} monodromy representation. Denote by
$$\varphi:k\longrightarrow k$$ the arithmetic Frobenius $a\longmapsto a^{p^m}$ and by $\rho^{(m)}$ the twist of $\rho$ by $\varphi$; in concrete terms: if $\rho$ has matrix coefficients given by $(\rho_{ij})\in \mathrm{GL}_r(\mathcal O(G))$, then the matrix coefficients of $\rho^{(m)}$ are $(\rho_{ij}^{p^m})$. Let $G^{(m)}$ be the $k$--group scheme $G\otimes_{k,\varphi}k$; this is the scheme $G$ endowed with a different morphism to $\mathrm{Spec}\,(k)$.  We have a commutative diagram of homomorphisms of $k$-group schemes
\[
\xymatrix{ G\ar[dr]_{\rho^{(m)}}\ar[r]^{\mathrm{Fr}_G^m}   & G^{(m)} \ar[d]^\gamma \\ & \mathrm{GL}_r       }
\]  
where $\gamma$ is defined by the matrix coefficients $(\rho_{ij})$, now regarded as elements in $\mathcal O(G^{(m)})$. It follows that $\mathrm{Ker}(\gamma)=\{1\}$, while $\mathrm{Ker}(\mathrm{Fr}_G^m)$ is a finite local group scheme (its ring of functions is a local Artin algebra). 
Since the representation 
\[
\xymatrix{\pi(\mathcal T(X),x_0)\ar[r] & G \ar[r]^{\rho^{(m)}}   & \mathrm{GL}_r           }  
\] 
corresponds to $(\mathrm{Fr}_X^m)^*E$, the image of $\rho^{(m)}$ in $\mathrm{GL}_r$ is finite (recall that the first arrow above is faithfully flat). Hence, $G$ is an extension of finite group schemes, which shows that $G$ is a finite group scheme. We have proved Theorem \ref{theorem_main''} for curves and hence (see Proposition \ref{curves_implies_general}) for any smooth projective variety. 
 

\end{document}